\documentclass[12pt]{article}
\usepackage[margin=25mm]{geometry}
\usepackage{amsmath}
\usepackage{amsfonts}
\usepackage{amssymb}
\usepackage{graphicx}
\usepackage{verbatim}
\immediate\write18{texcount -tex -sum  \jobname.tex > \jobname.wordcount.tex}

\usepackage{authblk}
\usepackage{amsthm}
\theoremstyle{plain}

\newtheorem{theorem}{Theorem}

\newtheorem*{corollary}{Corollary}

\theoremstyle{remark}
\newtheorem*{remark}{Remark}

\theoremstyle{definition}
\newtheorem{definition}{Definition}

\title{Vertex energies of splitting and shadow graphs}
\author[1]{Harshitha A}
\author[2]{Madhumitha K V}
\author[2]{Sabitha D'Souza}
\affil[1]{Department of Mathematics, Manipal Institute of Technology Bengaluru, Manipal Academy of Higher Education, Manipal, India}
\affil[2]{Department of Mathematics, Manipal Institute of Technology, Manipal Academy of Higher Education, Manipal, India, 576104}
\date{}

\begin{document}
	
	\maketitle
	\begin{abstract}
	Graph energy has been widely investigated in spectral graph theory. However, the manner in which this energy is distributed among individual vertices has received attention only more recently, following the introduction of energy of a vertex by O. Arizmendi et al. in 2018. Since then, several studies have explored this concept. In the present work, we focus on splitting graphs and shadow graphs and study how the total energy of a graph is distributed among its vertices. We further establish explicit expressions for vertex energies in these derived structures and analyze their behavior in comparison with the underlying graph.  
	\end{abstract}
	
	\begin{keywords}
	Eigenvectors, derived graphs, adjacency matrix
	\end{keywords}
\section{Introduction}	
The energy of a graph is introduced by Iven Gutman in 1978, as the sum of the absolute values of the eigenvalues of the adjacency matrix associated with the graph \cite{Ref1}. This concept attracted many researchers and has been studied extensively due to its novelty and applications in many fields. But recently the concept of energy of a vertex or vertex energy is introduced by O. Arizmendi et al., which gives the distribution of energy over all the vertices of the graphs \cite{2}. For a simple graph $G$ on $n$ vertices with $\lambda_1, \lambda_2,\ldots,\lambda_n$ as the eigenvalues of the adjacency matrix $A(G)$ of $G$, \begin{align*}
	\mathcal{E}(G)&=\sum\limits_{i=1}^{n}\left|\lambda_i\right|
\end{align*} 
is the energy of $G$.
Suppose $A(G)$ has the spectral decomposition
\[
A(G)=\sum_{i=1}^{n} \lambda_i\, u_i u_i^{T},
\]
where 
$u_i=(u_{i1},u_{i2},\dots,u_{in})^T$ are corresponding orthonormal eigenvectors.

The \emph{vertex energy} of a vertex $v_k \in V(G)$ is defined as
\[
\mathcal{E}_G(v_k)
=\sum\limits_{i=1}^{n} |\lambda_i|\, u_{ik}^2.
\]

Note that
\[
\mathcal{E}(G)=\sum_{k=1}^{n} \mathcal{E}_G(v_k).
\]

%  Quote references by using \cite. For example
%Arizmendi et al. \cite{3} established a fundamental theoretical basis by deriving the Coulson integral formula for the vertex energy of a graph. Further developments were made by Arizmendi and Sigarreta \cite{4}, who examined how the vertex energy changes under certain graph operations, specifically when joining trees. Ramane et al. \cite{5} investigated the vertex energy of subdivision graphs and derived explicit expressions for the vertex energies of such graphs. Yang et al. \cite{6} proposed an alternative technique for calculating vertex energy using the Estrada–Benzi approach, introducing efficient computational methods particularly suitable for large sparse graphs. Recently, Shrikanth et al. \cite{7} studied the vertex energy of small integral graphs, computing explicit values and classifying graphs based on vertex energy characteristics.

The concept of vertex energy has attracted considerable attention in recent years due to its ability to measure the spectral contribution of individual vertices in a graph. Arizmendi et al.~\cite{3} developed a Coulson integral formula for the vertex energy of a graph, analogous to the classical Coulson formula for graph energy, providing a theoretical foundation for the analytical evaluation of vertex energy. Arizmendi and Sigarreta~\cite{4} investigated the variation in vertex energy when joining trees and established several results describing how vertex energy behaves under tree-joining operations. Ramane et al.~\cite{5} examined the vertex energy of subdivision graphs and derived explicit expressions, highlighting the effect of subdivision operations on vertex-based spectral properties. Yang et al.~\cite{6} introduced a computational method based on the Estrada--Benzi approach, improving the calculation efficiency of vertex energy, particularly for large sparse graphs. Shrikanth et al.~\cite{7} studied the vertex energy of small integral graphs and classified such graphs in terms of their vertex energy characteristics, thereby revealing structural properties connected to integral spectra.

In this manuscript, we focus on the $m$-splitting graph and $m$-shadow graph associated with a given base graph $G$. We derive explicit expressions for the vertex energies of these derived graphs and express them in terms of the vertex energies of the original graph $G$. 

Further, in \cite{8}, the authors proved that the energy of the splitting graph is equal to $\sqrt{5}$ times the energy of the graph, and the energy of the shadow graph is equal to $2$ times the energy of the graph. In \cite{9}, the authors established that the energy of the $m$-splitting graph is equal to $\sqrt{4m+1}$ times the energy of the graph, while the energy of the $m$-shadow graph is equal to $m$ times the energy of the graph. In the present work, we verify and extend these results at the level of vertex energies.

\begin{definition}
	Let $G=(V,E)$ be a simple graph with the vertex set $V(G)=\{v_1,v_2,\dots,v_n\}$. 
	The splitting graph of $G$, denoted by $S'(G)$, is the graph obtained by adding for each vertex $v_i \in V(G)$
	a new vertex $v_i'$ (called the split vertex) and joining $v_i'$ to all the neighbors of $v_i$ in $G$.
	\[
	V(S'(G)) = V(G) \cup \{v_1', v_2', \dots, v_n'\},
	\]
	\[
	E(S'(G)) = E(G) \cup \{ v_i'v_j : v_i v_j \in E(G) \}.
	\]
	Thus, each split vertex $v_i'$ has the same open neighborhood as $v_i$ in $G$.
\end{definition}
\begin{definition}
	Let $G=(V,E)$ be a simple graph. The $m$-splitting graph of $G$, denoted by $Spl_m(G)$, 
	is the graph obtained by adding $m$ duplicate vertices for each vertex $v_i \in V(G)$, denoted by 
	$v_i^1, v_i^2, \dots, v_i^m$, and joining each $v_i^k$ to all the neighbors of $v_i$ in $G$.
	\[
	V(Spl_m(G)) = V(G) \cup \{ v_i^k : 1 \le i \le n,\, 1 \le k \le m \},
	\]
	\[
	E(Spl_m(G)) = E(G) \cup \{ v_i^k v_j : v_i v_j \in E(G),\, 1 \le k \le m \}.
	\]
\end{definition}
\begin{definition}
	Let $G$ be a graph with vertex set $V(G)=\{v_1,\dots,v_n\}$.  
	The shadow graph of $G$, denoted by $D_2(G)$, is the graph obtained by taking two copies of $G$, say $G^1$ and $G^2$, and adding edges between the copies as follows: for every edge $v_i v_j\in E(G)$, we join the vertex $v_i^1$ in $G^1$ to $v_j^2$ in $G^2$, and the vertex $v_i^2$ in $G^2$ to $v_j^1$ in $G^1$. No new edges are added within $G^1$ or within $G^2$.
\end{definition}

\begin{definition}
	For a positive integer $m$, the {$m$-shadow graph} of $G$, denoted by $D_m(G)$, is defined by taking $m$ copies $G^1,G^2,\dots,G^m$ of $G$ with vertex sets
	\[
	V(G^r)=\{v_{r,1},v_{r,2},\dots,v_{r,n}\}, \qquad r=1,\dots,m,
	\]
	and adding edges between every pair of distinct copies as follows:
	whenever $v_i v_j \in E(G)$, we join $v_{r,i}$ to $v_{s,j}$ for all $r\ne s$.
	No additional edges are added inside any individual copy $G^r$.
\end{definition}

%	\begin{theorem}\cite{9}
	%		The spectra of $m-$splitting graph is 
	%		
	%		\small
	%		$\begin{Bmatrix}
		%			0\lambda_1 & \ldots & 0\lambda_n & \frac{1+\sqrt{1+4m}}{2}\lambda_1& \ldots & \frac{1+\sqrt{1+4m}}{2}\lambda_n& \frac{1-\sqrt{1+4m}}{2}\lambda_1& \ldots&\frac{1-\sqrt{1+4m}}{2}\lambda_n\\
		%			m-1& \ldots & m-1& 1 & \ldots& 1& 1& \ldots& 1
		%		\end{Bmatrix}$
	%	\end{theorem}
%		\begin{theorem}\cite{9}
	%		The spectra of $m-$shadow graph is 
	%		
	%		\small
	%		$\begin{Bmatrix}
		%			0\lambda_1 & \ldots & 0\lambda_n & m\lambda_1& \ldots & m\lambda_n\\
		%			m-1& \ldots & m-1& 1 & \ldots& 1
		%		\end{Bmatrix}$
	%	\end{theorem}

\section{Vertex energy of $m-$splitting graph}
\begin{theorem}\label{thm1}
	Let $G$ be a graph with adjacency matrix $A$. Let $\mathrm{Spl}_m(G)$ be the $m$-splitting graph of $G$. Then the vertex energies satisfy
	\[
	\mathcal{E}_{\mathrm{Spl}_m(G)}(v_i)=
	\begin{cases}
		\dfrac{2m+1}{\sqrt{4m+1}}\,\mathcal{E}_G(v_i), & v_i\in V(G),\\[8pt]
		\dfrac{2}{\sqrt{4m+1}}\,\mathcal{E}_G(v_i), & v_i\in \mathrm{Spl}_m(G)\setminus V(G).
	\end{cases}
	\]
\end{theorem}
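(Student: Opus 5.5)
The plan is to write the adjacency matrix of $\mathrm{Spl}_m(G)$ as a Kronecker product and use the basis-free description of vertex energy. Order the vertices as $V(G)$ first, then the $m$ layers of duplicates $v_i^1,\dots,v_i^m$. Since $v_i^k$ is adjacent exactly to the $G$-neighbours of $v_i$, and duplicates are never adjacent to each other, we get $B=A(\mathrm{Spl}_m(G))=M\otimes A$. Here $M$ is the $(m+1)\times(m+1)$ matrix with $M_{11}=1$, $M_{1p}=M_{p1}=1$ for $p\ge 2$, and all other entries $0$.

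Next we observe that $\mathcal{E}_G(v_k)=\sum_i|\lambda_i|u_{ik}^2=(|A|)_{kk}$, where $|A|=\sum_i|\lambda_i|u_iu_i^T=(A^2)^{1/2}$. Thus the vertex energy is a diagonal entry of $|A|$ and does not depend on the choice of orthonormal eigenbasis, even with repeated eigenvalues. Take an orthonormal eigenbasis $\{w_j\}$ of $M$ and $\{u_i\}$ of $A$. Then $\{w_j\otimes u_i\}$ is an orthonormal eigenbasis of $B$ with eigenvalues $\mu_j\lambda_i$, so $|B|=|M|\otimes|A|$. Consequently the vertex in layer $p$ corresponding to $v_k$ has energy $(|M|)_{pp}\,\mathcal{E}_G(v_k)$. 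The theorem therefore reduces to computing the diagonal of $|M|$. We need $(|M|)_{11}=\frac{2m+1}{\sqrt{4m+1}}$, and by the symmetry among duplicate layers, $(|M|)_{pp}=\frac{2}{\sqrt{4m+1}}$ for $p\ge2$.

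To compute $|M|$, note that $M$ has rank $2$. Its kernel consists of vectors $(0,b_2,\dots,b_{m+1})$ with $\sum b_p=0$, and these contribute nothing to $|M|$. The nonzero eigenvalues come from vectors $(a,b,\dots,b)$, which give $\mu a=a+mb$ and $\mu b=a$. Hence $\mu^2-\mu-m=0$, so $\mu_\pm=\frac{1\pm\sqrt{4m+1}}{2}$, with eigenvectors $w_\pm=(\mu_\pm,1,\dots,1)$ and $\|w_\pm\|^2=\mu_\pm^2+m$. Then
\[
(|M|)_{11}=\frac{\mu_+^3}{\mu_+^2+m}-\frac{\mu_-^3}{\mu_-^2+m},\qquad (|M|)_{pp}=\frac{\mu_+}{\mu_+^2+m}-\frac{\mu_-}{\mu_-^2+m}\ (p\ge2).
\]

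The main (though routine) obstacle is simplifying these expressions. I would use $\mu^2=\mu+m$, so $\mu^2+m=\mu+2m$ and $\mu^3=\mu^2+m\mu=(m+1)\mu+m$. I would then combine fractions using $\mu_++\mu_-=1$, $\mu_+\mu_-=-m$ and $\mu_+-\mu_-=\sqrt{4m+1}$, which gives $(\mu_++2m)(\mu_-+2m)=m(4m+1)$. This should yield $\frac{2m+1}{\sqrt{4m+1}}$ and $\frac{2}{\sqrt{4m+1}}$ respectively. As a sanity check, the trace $\frac{(2m+1)+2m}{\sqrt{4m+1}}=\sqrt{4m+1}$ recovers $\mathcal{E}(\mathrm{Spl}_m(G))=\sqrt{4m+1}\,\mathcal{E}(G)$ from \cite{9}.
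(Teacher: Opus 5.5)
Your proposal is correct and is essentially the paper's argument. The paper's eigenvectors $\bigl(x,\,c(1_m\otimes x)\bigr)$ are exactly your $w_\pm\otimes x$, and its quadratic $\mu^2-\lambda\mu-m\lambda^2=0$ is your $\mu^2-\mu-m=0$ scaled by $\lambda$. Its sums $S_1,S_2$ are your $(|M|)_{11}$ and $(|M|)_{pp}$; the paper obtains $S_1$ from $S_2$ via $S_1=(\alpha_+-\alpha_-)-mS_2$, which is your trace check run in reverse.

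Packaging the argument as $|B|=|M|\otimes|A|$ is somewhat cleaner, because it makes explicit several points the paper leaves implicit. These are the independence of the vertex energy from the choice of eigenbasis, the $(m-1)n$ kernel eigenvectors, and the case $\lambda=0$, where the paper's $c=\lambda/\mu$ is undefined.
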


\begin{proof}
	Let $A x=\lambda x$ be an eigenpair of $A$ with $\|x\|=1$. Arrange vertices of $\mathrm{Spl}_m(G)$ so that the first $n$ coordinates correspond to $V(G)$ and the remaining $mn$ coordinates correspond to the $m$ copies. The adjacency matrix of $\mathrm{Spl}_m(G)$ has the block form
	$$B=\begin{pmatrix}
		A & A\otimes \mathbf{1}_{1\times m}\\[4pt]
		A\otimes \mathbf{1}_{m\times 1} & 0_{mn}
	\end{pmatrix},$$
	and we find eigenvectors of $B$ of the form
	$$y=\begin{pmatrix}
		x \\ c(1_m\otimes x)
	\end{pmatrix},$$
	where $1_m\otimes x$ denotes the stacked vector consisting of $m$ copies of $x$ and $c\in\mathbb{R}$ (or $\mathbb{C}$).
	
	Compute
	$$B y
	=
	\begin{pmatrix}
		(1+m c)\lambda x \\[4pt]
		\lambda(1_m\otimes x)
	\end{pmatrix}.$$
	Thus $y$ is an eigenvector with eigenvalue $\mu$ precisely when
	$$\mu x=(1+m c)\lambda x,\qquad \mu c(1_m\otimes x)=\lambda(1_m\otimes x).$$
	From the second relation $\mu c=\lambda$, i.e. $c=\dfrac{\lambda}{\mu}$. Substituting into the first gives the quadratic
	$\mu^2-\lambda\mu-m\lambda^2=0.$
	
	Hence the two corresponding eigenvalues are
	$$\mu^\pm=\lambda\alpha_\pm,\qquad \alpha_\pm=\frac{1\pm\sqrt{1+4m}}{2}.$$
	Note that $\sqrt{1+4m}>1$, so $\alpha_+>0$, $\alpha_-<0$, and $\alpha_+\alpha_-=-m$.
	
	For each sign $\pm$ the corresponding (unnormalized) eigenvector uses $c=\dfrac{1}{\alpha_\pm}$. The normalization factor is
	$$\|y^\pm\|^2=1+m\frac{1}{\alpha_\pm^2}=\frac{\alpha_\pm^2+m}{\alpha_\pm^2}.$$
	Therefore a unit eigenvector can be written as
	$$y^\pm=\begin{pmatrix}
		\dfrac{|\alpha_\pm|}{\sqrt{\alpha_\pm^2+m}}\, x \\[6pt]
		\dfrac{\operatorname{sgn}(\alpha_\pm)}{\sqrt{\alpha_\pm^2+m}}\,(1_m\otimes x)
	\end{pmatrix}.$$
	\begin{enumerate}
		\item 	For an original vertex $v_k\in V(G)$, the $k$-th coordinate in $y^\pm$ equals $$\dfrac{|\alpha_\pm|}{\sqrt{\alpha_\pm^2+m}}\,x_k.$$ Thus the contribution of $(\mu^\pm,y^\pm)$ is
		$$|\lambda|\,|x_k|^2\, \frac{|\alpha_\pm|\alpha_\pm^2}{\alpha_\pm^2+m}.$$
		Summing over both signs gives
		$$S_1=\sum_{\pm}\frac{|\alpha_\pm|\alpha_\pm^2}{\alpha_\pm^2+m}.$$
		Similarly,
		$$S_2=\sum_{\pm}\frac{|\alpha_\pm|}{\alpha_\pm^2+m}
		=\frac{1+\sqrt{1+4m}}{1+\sqrt{1+4m}+4m}
		-\frac{1-\sqrt{1+4m}}{1-\sqrt{1+4m}+4m}
		=\frac{2}{\sqrt{1+4m}}.$$
		Also,
		$\alpha_+-\alpha_-=\sqrt{1+4m}.$
		Hence
		$$S_1=(\alpha_+-\alpha_-)-mS_2
		=\sqrt{1+4m}-m\cdot\frac{2}{\sqrt{1+4m}}
		=\frac{(1+4m)-2m}{\sqrt{1+4m}}
		=\frac{2m+1}{\sqrt{1+4m}}.$$
		
		Therefore,
		$$\mathcal{E}_{\mathrm{Spl}_m(G)}(v_k)
		=\sum_{\lambda,x}|\lambda|\,|x_k|^2\,S_1
		=\frac{2m+1}{\sqrt{4m+1}}\,\mathcal{E}_G(v_k).$$
		\item  	For a split vertex $v_{n+k}$, the relevant coordinate is $$\dfrac{\operatorname{sgn}(\alpha_\pm)}{\sqrt{\alpha_\pm^2+m}}x_k.$$ Hence the contribution is
		$$|\lambda|\,|x_k|^2\,\frac{|\alpha_\pm|}{\alpha_\pm^2+m},$$
		and summing over both signs yields $$S_2=\dfrac{2}{\sqrt{1+4m}}.$$ Therefore,
		$$\mathcal{E}_{\mathrm{Spl}_m(G)}(v_{n+k})
		=\frac{2}{\sqrt{4m+1}}\,\mathcal{E}_G(v_k).$$
		
	\end{enumerate}
	
\end{proof}

\begin{remark}
	Summing over all original vertices and their $m$ copies gives
	
	\begin{align*}
		\mathcal{E}(\mathrm{Spl}_m(G))
		&=\sum_{k=1}^n\Big(\mathcal{E}_{\mathrm{Spl}_m(G)}(v_k)+\sum_{j=1}^m\mathcal{E}_{\mathrm{Spl}_m(G)}(v_k^j)\Big)
		\\&=\Big(\frac{2m+1}{\sqrt{4m+1}}+m\cdot\frac{2}{\sqrt{4m+1}}\Big)\mathcal{E}(G)\\&
		=\sqrt{4m+1}\,\mathcal{E}(G).
	\end{align*}
	Thus, the total energy of the $m-$splitting graph is exactly $\sqrt{4m+1}$ times the energy of the original graph.
\end{remark}
\begin{corollary}
	Let $G$ be a graph with adjacency matrix $A$. Then the vertex energy of the splitting graph $S'(G)$ satisfies \begin{align*}
		\mathcal{E}_{S'(G)}(v_i)=
		\begin{cases}
			\dfrac{3}{\sqrt{5}}\,\mathcal{E}_G(v_i), & v_i\in V(G),\\[8pt]
			\dfrac{2}{\sqrt{5}}\,\mathcal{E}_G(v_i), & v_i\in S'(G)\setminus V(G).
		\end{cases}
	\end{align*}
\end{corollary}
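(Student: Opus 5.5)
This corollary is the special case $m=1$ of Theorem~\ref{thm1}, so the plan is to invoke that theorem rather than redo the spectral computation. The first step is to identify $S'(G)$ with $\mathrm{Spl}_1(G)$. With $m=1$, the definition of the $m$-splitting graph adds exactly one duplicate $v_i^1$ for each $v_i$ and joins it to all neighbours of $v_i$ in $G$. This is precisely the split vertex $v_i'$ of $S'(G)$, and the edge sets agree as well:
\[
E(\mathrm{Spl}_1(G))=E(G)\cup\{v_i^1v_j : v_iv_j\in E(G)\}=E(S'(G)).
\]
Hence the map $v_i\mapsto v_i$, $v_i'\mapsto v_i^1$ is a graph isomorphism. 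Vertex energies are invariant under isomorphism: permuting coordinates conjugates the adjacency matrix by a permutation matrix, which permutes the entries of the eigenvectors and leaves the eigenvalues unchanged. Therefore $\mathcal{E}_{S'(G)}(v_i)=\mathcal{E}_{\mathrm{Spl}_1(G)}(v_i)$ and $\mathcal{E}_{S'(G)}(v_i')=\mathcal{E}_{\mathrm{Spl}_1(G)}(v_i^1)$.

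Next, substitute $m=1$ into Theorem~\ref{thm1}. For an original vertex the coefficient becomes $\frac{2\cdot1+1}{\sqrt{4\cdot1+1}}=\frac{3}{\sqrt5}$. For a split vertex it becomes $\frac{2}{\sqrt5}$, and here the split vertex $v_i'$ is compared with its parent vertex $v_i$ in $G$, exactly as in the theorem.

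As a consistency check, summing over all vertices gives $\left(\frac{3}{\sqrt5}+\frac{2}{\sqrt5}\right)\mathcal{E}(G)=\sqrt5\,\mathcal{E}(G)$. This agrees with the known result of \cite{8} on the energy of the splitting graph.

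The only point needing care is the isomorphism identification, together with the convention that in the second case $\mathcal{E}_G(v_i)$ refers to the parent vertex of the split vertex. Both are immediate from the definitions.
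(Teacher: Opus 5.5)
Your proposal is correct and matches the paper's argument: the paper also proves the corollary by setting $m=1$ in Theorem~\ref{thm1}, which gives the coefficients $3/\sqrt{5}$ and $2/\sqrt{5}$. Your explicit check that $S'(G)$ is isomorphic to $\mathrm{Spl}_1(G)$, and that vertex energies are invariant under this relabeling, is a detail the paper leaves implicit, and you handle it correctly.
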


\begin{proof}
	Proof is direct by substituting $m=1$ in Theorem \ref{thm1}.
	%	Let $A x=\lambda x$ be an eigenpair of $A$ with $\|x\|=1$. 
	%	The adjacency matrix of $S'(G)$ is
	%	$$
	%	B=\begin{pmatrix}A & A\\ A & 0\end{pmatrix}.
	%	$$
	%	Seeking an eigenvector of $B$ of the form $(x,cx)^T$ gives the quadratic
	%	$$
	%	\mu^2-\lambda\mu-\lambda^2=0,
	%	$$
	%	so the two eigenvalues are $\mu^\pm=\lambda\alpha_\pm$, where $\alpha_\pm=\tfrac{1\pm\sqrt{5}}{2}$.
	%	
	%	A corresponding eigenvector is
	%	$$
	%	y^\pm=\begin{pmatrix}\dfrac{\alpha_\pm}{\sqrt{1+\alpha_\pm^2}}x \\[6pt] \dfrac{1}{\sqrt{1+\alpha_\pm^2}}x\end{pmatrix}.
	%	$$
	%	For an original vertex $v_k\in V(G)$, the contribution of $(\mu^\pm,y^\pm)$ is
	%	$$
	%	|\mu^\pm|\,|(y^\pm)_k|^2=|\lambda|\,|x_k|^2\,\frac{|\alpha_\pm|\alpha_\pm^2}{1+\alpha_\pm^2}.
	%	$$
	%	Summing over $\pm$ gives the factor
	%	$$
	%	S_1=\sum_{\pm}\frac{|\alpha_\pm|\alpha_\pm^2}{1+\alpha_\pm^2}=\frac{3}{\sqrt{5}}.
	%	$$
	%	Hence
	%	$$
	%	\mathcal{E}_{S'(G)}(v_k)=\frac{3}{\sqrt{5}}\,\mathcal{E}_G(v_k).
	%	$$
	%	
	%	For a split vertex $v_{n+k}$, the contribution of $(\mu^\pm,y^\pm)$ is
	%	$$
	%	|\lambda|\,|x_k|^2\,\frac{|\alpha_\pm|}{1+\alpha_\pm^2},
	%	$$
	%	and the sum over $\pm$ gives
	%	$$
	%	S_2=\sum_{\pm}\frac{|\alpha_\pm|}{1+\alpha_\pm^2}=\frac{2}{\sqrt{5}}.
	%	$$
	%	Thus
	%	$$
	%	\mathcal{E}_{S'(G)}(v_{n+k})=\frac{2}{\sqrt{5}}\,\mathcal{E}_G(v_k).
	%	$$
	%	
	%	Finally, summing over all vertices,
	%	$$
	%	\mathcal{E}(S'(G))=\sum_{k=1}^n\big(\mathcal{E}_{S'(G)}(v_k)+\mathcal{E}_{S'(G)}(v_{n+k})\big)
	%	=\sqrt{5}\,\mathcal{E}(G).
	%	$$
	%	This completes the proof.
\end{proof} 
\begin{remark}
	Summing over all vertices $k=1,\dots,n$ gives
	$$\mathcal{E}(S'(G))
	=\sum_{k=1}^n\Big(\mathcal{E}_{S'(G)}(v_k)+\mathcal{E}_{S'(G)}(v_{n+k})\Big)
	=\frac{3+2}{\sqrt{5}}\,\sum_{k=1}^n \mathcal{E}_G(v_k)
	=\sqrt{5}\,\mathcal{E}(G).
	$$
	Thus, the total energy of the splitting graph is exactly $\sqrt{5}$ times the energy of the original graph.
\end{remark}

\section{Vertex energy of the $m-$shadow graph}
\begin{theorem}\label{thm:shadow}
	Let $G$ be a graph with adjacency matrix $A$. Let $D_m(G)$ be the $m$-shadow graph obtained by taking $m$ copies of $G$ and joining every vertex in each copy to all neighbours of the corresponding vertex in every other copy. Then, for every copy index $r\in\{1,\dots,m\}$ and every vertex $v_k\in V(G)$,
	\[
	\mathcal{E}_{D_m(G)}(v_{r,k})=\mathcal{E}_G(v_k).
	\]
\end{theorem}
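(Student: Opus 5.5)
The plan is to work directly with the adjacency matrix of $D_m(G)$ in Kronecker form, as in the proof of Theorem~\ref{thm1}, but to avoid choosing eigenvectors by hand. I will use the fact that the vertex energy is a diagonal entry of the matrix absolute value: with $|M|=\sum_i|\lambda_i|u_iu_i^T$ one has $\mathcal{E}_G(v_k)=|A|_{kk}$. This characterization is independent of the choice of orthonormal eigenbasis inside each eigenspace, which matters here because $D_m(G)$ has a large zero eigenspace.

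\textbf{Step 1: the adjacency matrix.} Order the vertices copy by copy, $v_{1,1},\dots,v_{1,n},v_{2,1},\dots$. Each copy $G^r$ carries the edges of $G$, and for $r\neq s$ we have $v_{r,i}\sim v_{s,j}$ exactly when $v_iv_j\in E(G)$. So every $n\times n$ block of the adjacency matrix, diagonal or off-diagonal, equals $A$. Hence
\[
B=A(D_m(G))=J_m\otimes A,
\]
where $J_m$ is the all-ones $m\times m$ matrix.

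\textbf{Step 2: the spectral decomposition.} The matrix $J_m$ is positive semidefinite and has the decomposition $J_m = m\, ee^T$ with $e=\tfrac{1}{\sqrt m}\mathbf 1_m$, its remaining eigenvalues being $0$. Write $A=\sum_i\lambda_i x_ix_i^T$. Then
\[
B=\sum_i (m\lambda_i)(e\otimes x_i)(e\otimes x_i)^T + (\text{terms with eigenvalue }0).
\]
The vectors $e\otimes x_i$ are orthonormal. This recovers the known spectrum: $m\lambda_i$ once each, together with $0$ of multiplicity $(m-1)n$. It follows that
\[
|B|=\sum_i m|\lambda_i|(e\otimes x_i)(e\otimes x_i)^T=J_m\otimes|A|.
\]
Equivalently, $|B| = |J_m|\otimes|A|$ with $|J_m|=J_m$.

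\textbf{Step 3: read off the diagonal.} The diagonal entry of $|B|$ at position $(r,k)$ is $(J_m)_{rr}\,|A|_{kk}=1\cdot\mathcal{E}_G(v_k)$. Concretely, the coordinate of $e\otimes x_i$ at $v_{r,k}$ is $x_{ik}/\sqrt m$, so the contribution of each eigenpair is $m|\lambda_i|\cdot x_{ik}^2/m$. Summing over $i$ gives $\mathcal{E}_{D_m(G)}(v_{r,k})=\mathcal{E}_G(v_k)$, and the zero eigenvalues contribute nothing.

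\textbf{Main obstacle.} The only delicate point is basis-independence. The zero eigenspace of $B$, and any repeated eigenvalues of $A$, admit many orthonormal bases, so I must justify that the definition of vertex energy does not depend on that choice. I would handle this by noting that $\sum_{\lambda_i=\theta}u_iu_i^T$ is the orthogonal projection onto the $\theta$-eigenspace, which is basis-free; hence $|B|$ and its diagonal are well defined.

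As a consistency check, summing over all $mn$ vertices gives $\mathcal{E}(D_m(G))=m\,\mathcal{E}(G)$, in agreement with \cite{9}.
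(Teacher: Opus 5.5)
Your proof is correct and follows essentially the same route as the paper. Both write $A(D_m(G))=J_m\otimes A$, take the unit eigenvectors $\tfrac{1}{\sqrt m}(1_m\otimes x)$ with eigenvalue $m\lambda$, note that each one contributes $|\lambda|\,x_k^2$ at $v_{r,k}$, and discard the zero eigenvalues. Your reformulation $|B|=J_m\otimes|A|$, with vertex energies read off as basis-free diagonal entries of $|B|$, is a worthwhile addition: it justifies the particular choice of eigenbasis for $B$ that the paper makes without comment.
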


\begin{proof}
	Let $A x=\lambda x$ be an eigenpair of $A$ with $\|x\|=1$. The adjacency matrix of $D_m(G)$ is $B=J_m\otimes A$, where $J_m$ is the $m\times m$ all-ones matrix. It is known that $J_m$ has one eigenvalue $m$ with eigenvector $1_m$, and $m-1$ eigenvalues equal to $0$ with eigenvectors orthogonal to $1_m$.
	
	Using the Kronecker product property, from the eigenpair $(\lambda,x)$ of $A$ we obtain:
	\[
	B(1_m\otimes x) = (J_m 1_m)\otimes (Ax) = m\lambda (1_m\otimes x),
	\]
	so $(m\lambda,\,1_m\otimes x)$ is an eigenpair of $B$, and $m\lambda$ is the only nonzero eigenvalue generated from $\lambda$. The remaining $m-1$ eigenvectors are of the form $z\otimes x$ with $z\perp 1_m$, and they satisfy $B(z\otimes x)=0$.
	
	Since $\|1_m\|^2=m$ and $\|x\|=1$, we have $\|1_m\otimes x\|^2=m$. Thus
	\[
	y=\frac{1}{\sqrt{m}}\,(1_m\otimes x)
	\]
	is a unit eigenvector corresponding to the eigenvalue $m\lambda$. Under the vertex ordering
	
	\noindent $v_{1,1},\dots,v_{n,1},v_{1,2},\dots,v_{n,m}$, the entry of $y$ at the vertex $v_{r,k}$ equals $\dfrac{1}{\sqrt{m}}\,x_k$.
	
	Hence the contribution of the eigenpair $(m\lambda,y)$ to the vertex-energy of $v_{r,k}$ is
	\[
	|m\lambda|\left|\frac{1}{\sqrt{m}}x_k\right|^2
	=|\lambda|\,|x_k|^2.
	\]
	
	If $\mu=0$ is any eigenvalue coming from $z\otimes x$, then its contribution to vertex-energy is
	\[
	|\mu|\,|(\text{eigenvector coordinate})|^2 = 0,
	\]
	so all zero eigenvalues give no contribution and are omitted from the summation.
	
	Therefore, summing over all eigenpairs $(\lambda,x)$ of $A$, we obtain
	\[
	\mathcal{E}_{D_m(G)}(v_{r,k})
	=\sum_{\lambda,x}|\lambda|\,|x_k|^2
	=\mathcal{E}_G(v_k).
	\]
	
\end{proof}

\begin{remark}
	Summing over $k$ and over the $m$ copies yields the total energy
	$D_m(G)$,
	\[
	E(D_m(G))
	=\sum_{r=1}^m \sum_{k=1}^n \mathcal{E}_{D_m(G)}(v_{r,k})
	=m \sum_{k=1}^n \mathcal{E}_G(v_k)
	=m\,E(G).
	\]
	Hence the total energy of the $m-$shadow graph is exactly $m$ times the energy of the original graph.	
\end{remark}
\begin{corollary}
	Let $G$ be a graph with adjacency matrix $A$. Let $D_2(G)$ denote the shadow graph of $G$.  Then
	\begin{align*}
		\mathcal{E}_{D_2(G)}(v_{r,i}) &= \mathcal{E}_G(v_i)\qquad\text{for all }r\in\{1,2\},\; i\in\{1,\dots,n\},
	\end{align*}
\end{corollary}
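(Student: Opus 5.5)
This corollary is the case $m=2$ of Theorem~\ref{thm:shadow}, so the plan is simply to check that the shadow graph $D_2(G)$ of the earlier definition coincides with the $2$-shadow graph in the sense of the $m$-shadow definition, and then specialize. Concretely, I will label the two copies $G^1,G^2$ with vertices $v_{1,i}=v_i^1$ and $v_{2,i}=v_i^2$. The shadow graph definition joins $v_i^1$ to $v_j^2$ and $v_i^2$ to $v_j^1$ whenever $v_iv_j\in E(G)$. This is exactly the rule ``$v_{r,i}\sim v_{s,j}$ for $r\neq s$'' in the $m$-shadow definition with $m=2$.

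One point needs care. The definition of $D_2(G)$ says that no new edges are added within a copy, but each copy $G^r$ still carries the edges of $G$. Hence the adjacency matrix under the ordering $v_{1,1},\dots,v_{1,n},v_{2,1},\dots,v_{2,n}$ is
\[
B=\begin{pmatrix} A & A\\ A & A\end{pmatrix}=J_2\otimes A,
\]
which is precisely the matrix used in the proof of Theorem~\ref{thm:shadow}. This identification is the only step needing attention; everything else is bookkeeping.

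Applying Theorem~\ref{thm:shadow} with $m=2$ then gives $\mathcal{E}_{D_2(G)}(v_{r,i})=\mathcal{E}_G(v_i)$ for $r\in\{1,2\}$ and every $i$. As a sanity check, I will redo the argument directly. Each unit eigenpair $(\lambda,x)$ of $A$ yields the unit eigenvector $\tfrac{1}{\sqrt2}(x,x)^T$ of $B$ with eigenvalue $2\lambda$. It also yields $\tfrac{1}{\sqrt2}(x,-x)^T$ with eigenvalue $0$. Running over an orthonormal eigenbasis of $A$, these $2n$ vectors form an orthonormal eigenbasis of $B$.

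The eigenvalue-$2\lambda$ vector contributes $|2\lambda|\cdot\tfrac12 x_i^2=|\lambda|x_i^2$ to the vertex $v_{r,i}$, and the zero eigenvalues contribute nothing. Summing over the eigenbasis reproduces $\mathcal{E}_G(v_i)$. Summing over all $2n$ vertices also recovers $\mathcal{E}(D_2(G))=2\mathcal{E}(G)$, consistent with \cite{8}.
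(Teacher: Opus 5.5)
Your proposal is correct and takes the same route as the paper, which also obtains the corollary by setting $m=2$ in Theorem~\ref{thm:shadow}. Your identification of the adjacency matrix of $D_2(G)$ with $J_2\otimes A$ and your direct check via the eigenvectors $\tfrac{1}{\sqrt2}(x,\pm x)^T$ are simply the $m=2$ case of that theorem's proof, with $z=(1,-1)^T$.
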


\begin{proof}
	This follows immediately from Theorem \ref{thm:shadow} by taking $m=2$.
\end{proof}

\begin{remark}
	
	Summing the equalities of the corollary over all $k=1,\dots,n$ gives
	\[
	\mathcal{E}(D_2(G))=\sum_{k=1}^n\big(\mathcal{E}_{D_2(G)}(v_k)+\mathcal{E}_{D_2(G)}(v_k')\big)
	=2\sum_{k=1}^n\mathcal{E}_G(v_k)
	=2\,\mathcal{E}(G).
	\]
	Hence the total energy of the shadow graph is twice the energy of $G$.
\end{remark}	\section{Conclusion}
In this work, we investigated the distribution of graph energy among individual vertices for two derived graph operations: the $m$-splitting graph and the $m$-shadow graph. For the $m$-splitting graph $\mathrm{Spl}_m(G)$, we established explicit scaling relations that show how vertex energies of the original graph are redistributed among both original and newly added vertices. In particular, it was proven that the total energy of $\mathrm{Spl}_m(G)$ increases by a factor of $\sqrt{4m+1}$ relative to the base graph.

For the $m$-shadow graph $D_m(G)$, we proved that each original vertex and each corresponding shadow vertex have identical vertex energies.

These results provide a clear understanding of how structural graph transformations influence spectral characteristics at the level of individual vertices.

%	\section{Declaration}\label{sec14}
%	\begin{itemize}
	%		\item Funding: Not applicable.
	%		\item Conflict of interest/Competing interests: There are no conflict of interest.
	%		\item Ethics approval: Not applicable.
	%		\item Consent to participate: Not applicable.
	%		\item Consent for publication: Not applicable.
	%		\item Availability of data and materials: Not applicable.
	%		\item Code availability: Not applicable.
	%		\item Authors' contributions: All authors contributed equally to this work.
	%	\end{itemize}

\end{document}